\documentclass[letterpaper, 10 pt, conference, draftcls, onecolumn]{ieeeconf} 
\IEEEoverridecommandlockouts 
\overrideIEEEmargins
\usepackage{fancyhdr}
\usepackage[margin=1in]{geometry}
\usepackage{amsmath}
\usepackage{graphicx}
\usepackage{amsfonts}
\usepackage{amssymb}
\usepackage{fixltx2e}
\usepackage{enumerate}
\usepackage{changepage}
\usepackage{mathtools}
\usepackage{color}
\usepackage{epstopdf}
\usepackage{empheq}
\usepackage{algorithm}
\usepackage{algpseudocode}

\usepackage{caption}
\usepackage{subcaption}

\usepackage{tikz}
\usetikzlibrary{matrix,positioning}


\thispagestyle{fancy}
\pdfpagewidth 8.5in
\pdfpageheight 11in

\mathtoolsset{showonlyrefs=true}

\setlength{\belowcaptionskip}{-10pt}











\newtheorem{definition}{Definition}

\newtheorem{lemma}{Lemma}

\newtheorem{theorem}{Theorem}
\newtheorem{talgorithm}{Algorithm}
\newtheorem{assumption}{Assumption}

\newtheorem{problem}{Problem}

\author{Stefan Hochhaus$^{\star,\dagger}$ and Matthew T. Hale$^{\star}$\thanks{$^{\star}$The authors are
    with the Department of Mechanical and Aerospace Engineering,
    University of Florida, Gainesville, FL, USA. Email:
    \texttt{smhochhaus@ufl.edu, matthewhale@ufl.edu}.}
    \thanks{$^\dagger$ Corresponding author.}
}

\begin{document}
\title{Asynchronous Distributed Optimization with \\ Heterogeneous
  Regularizations and Normalizations}
\maketitle
 
\begin{abstract}
As multi-agent networks grow in size and scale, they become increasingly
difficult to synchronize, though agents must work together even when generating and sharing different
information at different times. Targeting such cases, this paper presents an asynchronous
optimization framework in which the time between
successive communications and computations is unknown and unspecified for each agent.
Agents' updates are carried out in blocks, with each agent updating only
a small subset of all decision variables.
To provide robustness to asynchrony, each agent uses an independently chosen Tikhonov regularization. Convergence
is measured with respect to a weighted block-maximum norm in which
convergence of agents' blocks can be measured in different p-norms and weighted differently
to heterogeneously normalize problems. Asymptotic convergence is shown and convergence rates are derived explicitly in
terms of a problem's parameters, with only mild restrictions imposed upon them.
Simulation results are provided to verify the theoretical
developments made.
\end{abstract}

\section{Introduction}

Distributed optimization techniques have been applied in many areas
ranging from sensor networks \cite{Khan2009,Cortes2004,Rabbat2004} and communications \cite{Mitra1994,Chiang2007}, to robotics \cite{Soltero2014} and smart power
grids \cite{Caron2010}.
With this diversity in applications, there have emerged correspondingly
diverse problem formulations which address a wide variety of practical
considerations.
As multi-agent systems become increasingly complex, a key practical
consideration is the ability to tightly couple agents and the
timing of their behaviors. Often, perfect synchrony among agents'
communications and computations is difficult or impossible because closely coupling all agents in large
networks is also difficult or impossible.
Instead, one must sometimes utilize information that is asynchronously
generated and shared. This paper examines how to do so
in a distributed optimization setting.

There is a significant existing literature on distributed
optimization, including a large corpus of work on asynchronous optimization. One common approach is to assume that delays in communications
and computations are bounded, and this approach is used for example in
 \cite{Chen2012,Jadbabaie2003,Moreau2005,Nedic2007,Nedic2009,Nedic2010,
 Olshevsky2009,Ren2005,Touri2009},
and the delay bound parameter explicitly appears in convergence
rates in \cite{Chen2012,Nedic2007,Nedic2009,Olshevsky2009,Touri2009}. However, in some cases, delay bounds cannot be enforced. For example,
agents with mutually interfering communications may be unable to ensure that delay lengths stay below a certain threshold because delays are outside their control. Similarly, agents facing anti-access/area-denial
(A2AD) measures may be unable to predict when transmissions will be
received or even measure delay lengths at all. As a result, some works have addressed asynchronous optimization
with unbounded delays. Early work in this area includes \cite{Bertsekas1989},
as well as \cite{bertsekas1989parallell}, which gives a textbook-level treatment
and simplified proof of the main results in~\cite{Bertsekas1989}.

Work in \cite{Bertsekas1989} was expanded upon in \cite{Hale2017}, where
it was shown that a fixed Tikhonov regularization implies the existence
of the nested sets required in \cite{Bertsekas1989} for asymptotic convergence.
However, developments in \cite{Hale2017} require every agent to apply the same
regularization, which can be difficult to enforce and verify in practice,
especially in large decentralized networks.
Moreover, convergence in \cite{Hale2017} is measured with respect to
the same un-weighted norm for all agents. There is a wide variety of statistical
and machine learning problems which must be normalized due to disparate numerical scales across potentially
many orders of magnitude \cite{bishop1995neural}, and which may require measuring convergence of different components in different norms. While such problems are commonly solved using distributed optimization techniques, they are not accounted
for by the work in \cite{Hale2017}. Therefore, a fundamentally new approach is required
to account for heterogeneous regularizations and normalizations in the setting of distributed optimization.

In this paper we develop an asynchronous optimization framework to
address this gap. In particular, we examine set-constrained optimization
problems with potentially non-separable cost functions, and we allow agents' communications and computations to be arbitrarily
asynchronous, subject only to mild assumptions. Agents are permitted
to independently choose regularization parameters with no restrictions on
the disparity between them. Under these conditions, agents' convergence is measured with respect
to a weighted block-maximum norm which allows for heterogeneous normalizations
of agents' distance to an optimum in order to accommodate problems with different
numerical scales. Convergence rates are developed
in terms of agents' communications and computations without specifying
when they must occur. The framework developed in this paper uses a block-based update scheme in which each agent updates only a subset of all decision variables in a problem in order to provide a scalable update law for large convex programs. The contributions of this paper therefore consist of a scalable optimization framework that accommodates heterogeneous regularizations and normalizations, together with its convergence rate.


The rest of the paper is organized as follows. Section~II defines
the optimization problems to be solved and regularizations used. Next,
Section~III defines the block-based multi-agent update law, and Section~IV
proves its convergence and derives its convergence rate. After that, Section~V
presents simulation results and Section~VI provides concluding remarks.

\section{Tikhonov Regularization and Problem Statement}
In this section we describe the class of problems to be solved and the
assumptions imposed upon problem data. We then introduce heterogeneous
regularizations and the need for heterogeneous normalizations. Then we give
a formal problem statement that is the focus of the remainder of the paper.

We consider convex optimization problems spread across teams
of agents. In particular, we consider teams comprised of $N$ agents, where
agents are indexed over
$i\in\left[N\right]\coloneqq\left\{ 1,\ldots,N\right\}$. Agent $i$ has a
decision variable $x_{i}\in\mathbb{R}^{n_{i}}$, $n_{i}\in\mathbb{N}$, which
we refer to as its state, and we allow for $n_{i}\neq n_{j}$ when $i\neq j$.
The state $x_{i}$ is subject to the set constraint
$x_{i}\in X_{i}\subset\mathbb{R}^{n_{i}}$,
which can represent, e.g., that a mobile robot must stay in a given area.
We make the following assumption about each $X_{i}$.

\begin{assumption}
For all $i\in\left[N\right]$, the set $X_{i}\subset\mathbb{R}^{n_{i}}$ is
non-empty, compact, and convex. \hfill$\triangle$
\end{assumption}

Towards making a formal problem statement,
we aggregate agents' set constraints by defining $X\coloneqq X_1\times\dots\times X_N$, and Assumption 1 ensures that $X$ is also non-empty, compact,
and convex. We further define the ensemble state as $x\coloneqq\left(x_{1}^{T},\ldots,x_{N}^{T}\right)^{T}\in X\subset\mathbb{R}^{n}$,
where $n=\underset{i\in\left[N\right]}{\sum}n_{i}$. We consider problems in which each agent has a local
objective function $f_{i}$ to minimize, which can represent, e.g., a mobile
robot's desire to minimize its distance to a target location; only agent $i$ needs to know $f_{i}$. The agents are
also collectively subject to a coupling cost $c$, which can represent the
cost of communication congestion in a network, and we allow for $c$ to be non-separable.
We then make the following assumption about the functions $f_{i}$ and $c$.
\begin{assumption}
The functions $f_{i}$, $i\in\left[N\right]$, and $c$ are convex and $C^{2}$
(twice continuously differentiable) in $x_{i}$ and $x$, respectively. \hfill$\triangle$
\end{assumption}

In particular, $\nabla{f}$ is Lipschitz and we denote its Lipschitz constant by $L$. The sum of these costs then gives the aggregate cost function
\begin{equation}
f\left(x\right)\coloneqq c\left(x\right)+\underset{i\in[N]}{\sum}f_{i}\left(x_{i}\right),
\end{equation}
and the agents will jointly minimize $f$. For simplicity of the forthcoming analysis, we assume that $f$ has a unique minimizer. To endow $f$ with an inherent
robustness to asynchrony, we will regularize it before agents start optimizing. In
particular, we regularize $f$ on a per-agent basis, where agent $i$ uses the
regularization parameter $\alpha_{i}>0$ and where we allow $\alpha_{i}\neq\alpha_{j}$ when $i\neq j$. Regularizing $f$ makes it strongly convex, and
this will be shown to provide robustness to asynchrony below. The
regularized form of $f$ is denoted $f_{A}$, and is defined as
\begin{equation}
f_{A}\left(x\right)\coloneqq f\left(x\right)+\frac{1}{2}x^{T}Ax,
\end{equation}
where $A=\textrm{diag}\left(\alpha_{1}I_{n_{1}},\ldots,\alpha_{N}I_{n_{N}}\right),$
and where $I_{n_{i}}$ is the $n_{i}\times n_{i}$ identity matrix.

In some optimization settings, some decision variables
evolve at drastically different numerical scales \cite{bishop1995neural}. To more
meaningfully evaluate the convergence of agents with respect to one
another, it would be useful to normalize each agent's distance to an optimum
to prevent the error of one agent dominating the convergence
analysis. Allowing heterogeneous normalizations would therefore give a more
useful estimate of the distance to an optimum, and this should be
accounted for by our framework. Moreover, each agent may wish to evaluate the
convergence of its own state using a particular $p$-norm. Therefore, our framework should accommodate
agents measuring the distance to an optimum in different norms. Bearing these
criteria in mind, we now state
the problem that is the focus of the rest of the paper. 
\begin{problem}
For a team of $N$ agents,
\begin{equation}
\underset{x\in X}{\textrm{minimize }}f_{A}\left(x\right)
\end{equation}
while measuring convergence with heterogeneous normalization constants and norms across the
agents. \hfill$\lozenge$
\end{problem}

Section III specifies the structure of the asynchronous communications and computations used to solve Problem~1.

\section{Block-Based Multi-Agent Update Law}
To define the exact update law for each agent's state,
we must describe what information is stored and how agents communicate.
Each agent will store a vector containing its own states
and those of agents it communicates with. Each agent only updates
its own states within the vector it stores onboard. States stored onboard
agent $i$ which correspond to other agents' states are only updated
when those agents send their states to agent $i$. This type of block-based
update can be used to capture, for example, when an agent does not have the
information required to update other agents' states, or when it is desirable
to parallelize updates to reduce each agent's computational burden.

Formally, we will denote agent $i$'s full vector of states by $x^{i}$. Agent
$i$'s own states in this vector are then denoted by $x_{i}^{i}$. The current
values stored onboard agent $i$ for agent $j$ are denoted by $x_{j}^{i}$.
At timestep $k$, agent $i$'s full state vector is denoted $x^{i}\left({k}\right)$,
with its own states denoted $x_{i}^{i}\left({k}\right)$ and those of agent
$j$ denoted $x_{j}^{i}\left({k}\right)$. At any single timestep, agent $i$
may or may not update its states due to asynchrony in agents' computations, and the times of these
updates must be accounted for. We define the set $K^{i}$ to be the
collection of time indices $k$ at which agent $i$ updates $x_{i}^{i}$; agent
$i$ does not compute an update for time
indices $k\notin K^{i}$. Using this notation, agent $i$'s update
law can be written as
\begin{equation}
x_{i}^{i}\left(k+1\right)=\left\{ \begin{array}{cc}
x_{i}^{i}\left(k\right)-\gamma\nabla_{i}f_{A}\left(x^{i}\left(k\right)\right) & k\in K^{i}\\
x_{i}^{i}\left(k\right) & k\notin K^{i}
\end{array}\right.,
\end{equation}
where agent $i$ uses stepsize $\gamma >0$, which will be bounded below.
Here $\nabla_{i}f_{A}\coloneqq\frac{\partial f_{A}}{\partial x_{i}}$
is the gradient of the regularized cost function with respect to $x_{i}$. The
significance of agent $i$'s choice of regularization parameter can
be seen by expanding $\nabla_{i}f_{A}\left(x^{i}\left(k\right)\right)$
as $\nabla_{i}f_{A}\left(x^{i}\left(k\right)\right)=\nabla_{i}f\left(x^{i}\left(k\right)\right)+\alpha_{i}x_{i}^{i}\left(k\right)$,
where $\alpha_{i}>0$ is set by agent~$i$ alone.


In order to account for communication delays we use $\tau_{j}^{i}\left(k\right)$
to denote the time at which the value of
$x_{j}^{i}\left(k\right)$ was originally computed by agent $j$. For example,
if agent $j$ computes a state update at time $k_{a}$ and immediately
transmits it to agent $i$, then agent $i$ may receive this state update at time
$k_{b}>k_{a}$ due to communication delays. Then $\tau_{j}^{i}$ is defined so
that $\tau_{j}^{i}\left(k_{b}\right)=k_{a}$, the time at which agent $j$
originally computed the update just received by agent $i$.
Concerning $K^{i}$ and $\tau_{j}^{i}\left(k\right)$, we have the following assumption.
\begin{assumption}
For all $i\in\left[N\right]$, the set $K^{i}$ is infinite. Moreover, for all $i\in\left[N\right]$ and $j\in\left[N\right]\backslash\left\{i\right\}$, if $\left\{k_{d}\right\}_{d\in\mathbb{N}}$
is a sequence in $K^{i}$ tending to infinity,
then
\begin{equation}
\lim_{d\rightarrow\infty} \tau_{j}^{i}\left(k_{d}\right)=\infty.
\end{equation} \hfill$\triangle$
\end{assumption}
Assumption~3 is quite mild in that it simply requires that no agent ever
permanently stop updating and sharing information. 
For $i\neq j$, the sets $K^{i}$ and $K^{j}$ need not have any relationship because agents' updates are asynchronous. The entire update law for all agents can then be written as follows.
\begin{talgorithm}
For all $i\in\left[N\right]$ and $j\in\left[N\right]\backslash\left\{i\right\}$, execute
\begin{equation}
\begin{aligned}x_{i}^{i}\!\left(k\!+\!1\right) & \!=\!\left\{ \!\!\!\begin{array}{cc}
x_{i}^{i}\left(k\right)-\gamma\nabla_{i}f_{A}\left(x^{i}\left(k\right)\right) & k\in K^{i}\\
x_{i}^{i}\left(k\right) & k\notin K^{i}
\end{array}\right.\\
x_{j}^{i}\!\left(k\!+\!1\right) & \! =\!\left\{ \!\!\!\begin{array}{cc}
x_{j}^{j}\left(\tau_{j}^{i}\left(k+1\right)\right) & \!\!\!\!\textrm{i receives j's state at k+1}\\
x_{j}^{i}\left(k\right) & \textrm{otherwise}
\end{array}\right.\!\!\!\!\!.
\end{aligned}
\end{equation} \hfill$\diamond$
\end{talgorithm}

In Algorithm 1 we see that $x_{j}^{i}$ changes only when agent $i$ receives a
transmission from agent $j$; otherwise it remains constant. Agent $i$ can
therefore reuse old values of agents $j$'s state many times and can reuse
different agents' states different numbers of times. Showing convergence of this
update law must take these delays into account, and that is the subject of the
next section.

\section{Convergence of Asynchronous Optimization}
In this section we prove the convergence of the multi-agent block update law in
Algorithm~1. We first define the block-maximum norm used to
measure convergence and then define a collection of nested sets that will be used
to show asymptotic convergence of all agents adapted from the approach in~\cite{Hale2017}. Then a convergence rate is developed using parameters from
these sets. 
\subsection{Block-Maximum Norms}
We begin by analyzing the convergence of the optimization algorithm
using block maximum norms similar to those defined in \cite{Bertsekas1989}, \cite{bertsekas1989parallell},
and \cite{Hale2017}, and we do so to accommodate the need for heterogeneus normalizations and norms in Problem~1. Due to asynchrony
in agents' communications, we will generally have $x^{i}\left(k\right)\neq x^{j}\left(k\right)$
for all agents $i$ and $j$ and all timesteps $k$. We will refer
to $x_{i}^{i}$ as the $i^{th}$ block of $x^{i}$ and $x_{j}^{i}$
as the $j^{th}$ block of $x^{i}$. With these blocks defined we next define the block-maximum norm that will be used to measure convergence below.
\begin{definition}
Let $x\in\mathbb{R}^{n}$ consist of $N$ blocks, with $x_{i}\in\mathbb{R}^{n_{i}}$
being the $i^{th}$ block. The $i^{th}$ block is weighted by some normalization constant
$w_{i}\geq1$ and is measured in the $p_{i}$-norm for some $p_{i}\in\left[1,\infty\right]$.
The norm of the full vector $x$ is defined as the maximum norm of any single block, i.e.,
\begin{equation}
\left\Vert x\right\Vert _{\max}\coloneqq\underset{i\in\left[N\right]}{\max}\frac{\left\Vert x_{i}\right\Vert _{p_{i}}}{w_{i}}.
\end{equation}\hfill$\triangle$
\end{definition}

The following lemma allows us to upper-bound the induced block-maximum
matrix norm by the Euclidian matrix norm, which will be used below in our
convergence analysis. In this lemma, we use the notion of a block of an
$n\times n$ matrix. Given a matrix $B\in\mathbb{R}^{n\times n}$, where
$n=\sum_{i=1}^{N}n_{i}$, the $i^{th}$ block of $B$, denoted $B^{\left[i\right]}$,
is the $n_{i}\times n$ matrix formed by rows of $B$ with indices $\sum_{k=1}^{i-1}n_{k}+1$ through $\sum_{k=1}^{i}n_{k}$. We then have the following result.
\begin{lemma}
Let $p_{\min}\coloneqq\min_{i\in\left[{N}\right]} p_{i}$ and let $w_{\min}=\min_{i\in\left[{N}\right]} w_{i}$. Then for all $B\in\mathbb{R}^{n\times n}$,
\begin{equation}
\left\Vert B\right\Vert _{\max}\leq\left\{ \begin{array}{cc}
n^{\left(p_{\min}^{-1}-\frac{1}{2}\right)}w_{\min}^{-1}\left\Vert B\right\Vert _{2} & p_{\min}<2\\
\frac{1}{w_{\min}}\left\Vert B\right\Vert _{2} & p_{\min}\geq2
\end{array}\right..
\end{equation}
\end{lemma}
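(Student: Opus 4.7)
My plan is to prove the inequality through classical finite-dimensional norm-equivalence applied block-by-block, together with the submatrix inequality $\|B^{[i]}x\|_2 \le \|Bx\|_2$ and the defining property $\|Bx\|_2 \le \|B\|_2\|x\|_2$ of the spectral norm. First I would fix an arbitrary $x \in \mathbb{R}^n$ and rewrite
\begin{equation*}
\|Bx\|_{\max} = \max_{i \in [N]} \frac{\|B^{[i]}x\|_{p_i}}{w_i},
\end{equation*}
using that the $i$-th block of $Bx$ is exactly $B^{[i]}x$.

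For each $i$, I would then apply the standard comparison between $p_i$- and $2$-norms on $\mathbb{R}^{n_i}$, namely
\begin{equation*}
\|y\|_{p_i} \le \alpha_i\|y\|_2, \qquad \alpha_i = \begin{cases} n_i^{1/p_i - 1/2}, & p_i < 2,\\ 1, & p_i \ge 2,\end{cases}
\end{equation*}
which follows from H\"older's inequality in the subquadratic regime and from monotonicity of $\ell_p$-norms in the superquadratic regime. Setting $y = B^{[i]}x$ and chaining with $\|B^{[i]}x\|_2 \le \|Bx\|_2 \le \|B\|_2\|x\|_2$ produces the block-level bound $\|B^{[i]}x\|_{p_i}/w_i \le (\alpha_i/w_i)\|B\|_2\|x\|_2$ for every $i$.

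To conclude, I would take the maximum over $i$ and estimate $\max_i \alpha_i/w_i$ separately in the two regimes appearing in the lemma. When $p_{\min} \ge 2$ every $\alpha_i$ equals $1$, so the maximum is $1/w_{\min}$, giving the second branch immediately. When $p_{\min} < 2$ I need the uniform estimate $\alpha_i \le n^{1/p_{\min} - 1/2}$ for all $i$: for indices with $p_i < 2$ this uses $n_i \le n$ together with the fact that $p \mapsto 1/p - 1/2$ is decreasing, while for indices with $p_i \ge 2$ it follows from $n^{1/p_{\min} - 1/2} \ge 1$ whenever $p_{\min} < 2$ and $n \ge 1$. I expect the main obstacle to be precisely this uniform block-wise bound on $\alpha_i/w_i$, since the regime split must be handled carefully so that both subquadratic and superquadratic blocks are dominated by the single factor $n^{1/p_{\min} - 1/2}/w_{\min}$. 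Once this uniform bound is in hand, combining it with the spectral-norm inequality $\|Bx\|_2 \le \|B\|_2\|x\|_2$ and invoking the definition of the induced block-maximum norm yields the two branches of the lemma.
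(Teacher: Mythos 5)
Your proposal is correct and follows essentially the same route as the paper: compare each block's $p_i$-norm to the Euclidean norm via standard $\ell_p$ norm equivalences (splitting on whether the exponent is below or above $2$), chain with $\|B^{[i]}x\|_2\leq\|Bx\|_2\leq\|B\|_2\|x\|_2$, and take the supremum. The only difference is that you perform the norm comparison per block with a constant $\alpha_i$ and then uniformly bound $\max_i \alpha_i/w_i$, whereas the paper first passes every block to the $p_{\min}$-norm (via an unnecessary summation over blocks) before invoking the equivalence with the $2$-norm; your bookkeeping is slightly cleaner but the argument is the same.
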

\begin{proof}
For $B^{\left[i\right]}$ the $i^{th}$ block of $B$ and any $x\in\mathbb{R}^{n}$,
by definition we have
\begin{equation}
\begin{aligned}\frac{\left\Vert B^{\left[i\right]}x\right\Vert _{p_{i}}}{w_{i}} & =\frac{1}{w_{i}}\left(\sum_{k=1}^{n_{i}}\left|\sum_{j=1}^{n}B_{k,j}^{\left[i\right]}x_{j}\right|^{p_{i}}\right)^{\frac{1}{p_{i}}}.\end{aligned}
\label{eq:BlockNorm}
\end{equation}
From the definition of a $p$-norm, the right side of Equation~\eqref{eq:BlockNorm}
will always be non-negative. Thus summing the right-hand side over every block
results in
\begin{equation}
\begin{aligned}\frac{\left\Vert B^{\left[i\right]}x\right\Vert _{p_{i}}}{w_{i}} & \leq\sum_{i=1}^{N}\left(\frac{1}{w_{i}}\left(\sum_{k=1}^{n_{i}}\left|\sum_{j=1}^{n}B_{k,j}^{\left[i\right]}x_{j}\right|^{p_{i}}\right)^{\frac{1}{p_{i}}}\right)\end{aligned}.
\end{equation}
Next,
recalling that $\left\Vert{x}\right\Vert_{q}\leq\left\Vert{x}\right\Vert_{r}$
for all vectors $x\in\mathbb R^{n}$ and all $q\geq r>0$, we find that
\begin{equation}
\begin{aligned}\frac{\left\Vert B^{\left[i\right]}x\right\Vert _{p_{i}}}{w_{i}} & \leq\frac{1}{w_{min}}\sum_{i=1}^{N}\left(\sum_{k=1}^{n_{i}}\left|\sum_{j=1}^{n}B_{k,j}^{\left[i\right]}x_{j}\right|^{p_{i}}\right)^{\frac{1}{p_{i}}}\\
 & \leq\frac{1}{w_{\min}}\sum_{i=1}^{N}\left(\sum_{k=1}^{n_{i}}\left|\sum_{j=1}^{n}B_{k,j}^{\left[i\right]}x_{j}\right|^{p_{\min}}\right)^{\frac{1}{p_{\min}}}.
\end{aligned}
\end{equation}
This then allows us to express the sum over all rows of $B$ via
\begin{equation}
\frac{\left\Vert B^{\left[i\right]}x\right\Vert _{p_{i}}}{w_{i}}\leq\frac{1}{w_{min}}\left(\sum_{l=1}^{n}\left|\sum_{j=1}^{n}B_{l,j}x_{j}\right|^{p_{\min}}\right)^{\frac{1}{p_{\min}}}.
\end{equation}
If $p_{min}\geq2$, then $\left\Vert B^{\left[i\right]}x\right\Vert _{p_{i}}\leq\left\Vert B^{\left[i\right]}x\right\Vert _{2}$ for all $p_i$.
If $p_{min}<2$, we recall that $\left\Vert x\right\Vert _{l}\leq\left\Vert x\right\Vert _{p_{\min}}\leq n^{\left(p_{\min}^{-1}-l^{-1}\right)}\left\Vert x\right\Vert _{l}$,
which follows from H{\"o}lder's inequality for $0<p_{min}<l$, and observe that
$\left\Vert B^{\left[i\right]}x\right\Vert _{p_{i}}\leq\left\Vert B^{\left[i\right]}x\right\Vert _{p_{\min}}\leq n^{\left(p_{\min}^{-1}-\frac{1}{2}\right)}\left\Vert Bx\right\Vert _{2}$.
Combining these inequalities we find that
\begin{equation}
\frac{\left\Vert B^{\left[i\right]}x\right\Vert _{p_{i}}}{w_{i}}\leq\left\{ \begin{array}{cc}
n^{\left(p_{\min}^{-1}-\frac{1}{2}\right)}w_{\min}^{-1}\left\Vert Bx\right\Vert _{2} & p_{\min}<2\\
\frac{1}{w_{\min}}\left\Vert Bx\right\Vert _{2} & p_{\min}\geq2
\end{array}\right.
\end{equation}
for all $i$. Thus the weighted block maximum norm of $Bx$ for any $x\in\mathbb{R}^{n}$
can be bounded as 
\begin{equation}
\begin{aligned}\left\Vert Bx\right\Vert _{\max} & =\underset{i\in\left[N\right]}{\max}\frac{\left\Vert B^{\left[i\right]}x\right\Vert _{p_{i}}}{w_{i}}\\
 & \leq\left\{ \begin{array}{cc}
n^{\left(p_{\min}^{-1}-\frac{1}{2}\right)}w_{\min}^{-1}\left\Vert Bx\right\Vert _{2} & p_{\min}<2\\
\frac{1}{w_{\min}}\left\Vert Bx\right\Vert _{2} & p_{\min}\geq2
\end{array}\right.,
\end{aligned}
\label{eq:BlockNormBound}
\end{equation}
and the lemma follows by taking the supremum over all unit vectors $x$.
\end{proof}
\subsection{Convergence Via Nested Sets}
We now begin the convergence analysis for the block-based update law in Algorithm~1 where agents are asynchronously optimizing.
In order for this system to converge using the communications
described in the previous section, we construct a sequence of sets,
$\left\{ X\left(s\right)\right\} _{s\in\mathbb{N}}$, based on work in \cite{Bertsekas1989} and \cite{bertsekas1989parallell}. Below we use the notation $\hat{x}_{A}\coloneqq\arg\min_{x\in X}f_{A}\left(x\right)$
to specify the minimizer of the regularized cost function $f_{A}$. We state the
conditions imposed upon these sets as an assumption, and this assumption will be
shown below to be satisfied using the heterogeneous regularization applied by $A$.
\begin{assumption}
The sets $\left\{ X\left(s\right)\right\} _{s\in\mathbb{N}}$ satisfy:
\begin{enumerate}
\item $\cdots\subset X\left(s+1\right)\subset X\left(s\right)\subset\cdots\subset X$
\item $\underset{s\rightarrow\infty}{\lim}X\left(s\right)=\left\{ \hat{x}_{A}\right\} $ 
\item $X_{i}\left(s\right)\subset X_{i}$ for all $i\in\left[N\right]$
and $s\in\mathbb{N}$ such that $X\left(s\right)=X_{1}\left(s\right)\times\cdots\times X_{N}\left(s\right)$
\item $\theta_{i}\left(y\right)\in X_{i}\left(s+1\right)$, where $\theta_{i}\left(y\right)\coloneqq y_{i}-\gamma\nabla_{i}f_{A}\left(y\right)$
for all $y\in X\left(s\right)$ and $i\in\left[N\right]$.\hfil$\triangle$
\end{enumerate} 
\end{assumption}

Assumptions 4.1 and 4.2 together show that these sets are nested as
they converge to the minimum $\hat{x}_{A}$. Assumption~4.3 allows for
the blocks to be updated independently by the agents, and Assumption
4.4 ensures that state updates always progress down the chain of nested sets such that only
forward progress toward $\hat{x}_{A}$ is made. It is shown in \cite{Bertsekas1989} and \cite{bertsekas1989parallell} that the existence of such a sequence of sets implies asymptotic convergence of the asynchronous update law in Algorithm~1, and we therefore use this construction to show asymptotic convergence in this paper. Defining the Lipschitz constant of $\nabla_{i}f_{A}$ as
$L_{i}$, we further define $L_{\max}\coloneqq\underset{i\in\left[N\right]}{\max}\ L_{i}$,
 and then define the constant
\begin{equation}
q=\max\left\{ \underset{i\in\left[N\right]}{\max}\left|1-\gamma\alpha_{i}\right|,\underset{i\in\left[N\right]}{\max}\left|1-\gamma L_{i}\right|\right\} .
\end{equation}
Letting $\gamma\in\left(0,\frac{2}{L_{\max}}\right)$ and $\alpha\in\left(0,L_{\max}\right)$,
we find $q\in\left(0,1\right)$; a proof for this can be seen in \cite{polyak1987}.
We then proceed to define $D_{o}$ as
\begin{equation}
D_{o}\coloneqq\underset{i\in\left[N\right]}{\max}\left\Vert x^{i}\left(0\right)-\hat{x}_{A}\right\Vert _{\max},
\end{equation}
which is the worst-performing block onboard any agent with respect to
distance to $\hat{x}_{A}$ at timestep $0$. We then define the sequence of
sets $\left\{ X\left(s\right)\right\} _{s\in\mathbb{N}}$ as
\begin{equation}\label{eq:Xsets}
X\left(s\right)=\left\{ y\in X:\left\Vert y-\hat{x}_{A}\right\Vert _{\max}\leq q^{s}D_{o}\right\},
\end{equation}
and this construction is shown in the following theorem to satisfy Assumption~4, thereby ensuring asymptotic convergence of Algorithm~1.
\begin{theorem}
The collection of sets $\left\{ X\left(s\right)\right\} _{s\in\mathbb{N}}$
as defined in Equation~\eqref{eq:Xsets} satisfies Assumption 4. 
\end{theorem}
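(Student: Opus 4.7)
The proof will verify each of the four conditions of Assumption~4 in turn. Conditions (1) and (2) are immediate from the already-established fact that $q \in (0,1)$: since $q^{s+1} < q^s$, any $y \in X(s+1)$ automatically lies in $X(s)$, and the inclusion $X(s) \subset X$ is built into the definition; the limit $\lim_{s\to\infty} X(s) = \{\hat{x}_A\}$ follows because $q^s D_o \to 0$. Condition (3) is the observation that the block-maximum norm decouples across agents: the constraint $\|y - \hat{x}_A\|_{\max} \leq q^s D_o$ is equivalent to the $N$ per-block constraints $\|y_i - \hat{x}_{A,i}\|_{p_i}/w_i \leq q^s D_o$, so $X(s)$ is the Cartesian product $X_1(s) \times \cdots \times X_N(s)$ with $X_i(s) := \{y_i \in X_i : \|y_i - \hat{x}_{A,i}\|_{p_i} \leq w_i q^s D_o\}$.

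The substance of the theorem lies in condition (4): for every $y \in X(s)$ and every $i$, the block update $\theta_i(y) = y_i - \gamma \nabla_i f_A(y)$ must satisfy $\|\theta_i(y) - \hat{x}_{A,i}\|_{p_i}/w_i \leq q^{s+1} D_o$. The plan is to show the stronger statement that the joint map $\theta$ is a $q$-contraction toward $\hat{x}_A$ in the block-maximum norm. Using the first-order optimality condition $\nabla f_A(\hat{x}_A) = 0$ together with the fundamental theorem of calculus, write
\[
\theta(y) - \hat{x}_A = \bigl(I - \gamma H(y)\bigr)(y - \hat{x}_A), \qquad H(y) := \int_0^1 \nabla^2 f_A\bigl(\hat{x}_A + t(y - \hat{x}_A)\bigr)\, dt.
\]
By Assumption~2 and the block-diagonal form of the regularization $A$, $H(y)$ is symmetric with spectrum contained in $[\min_i \alpha_i,\ \max_i L_i]$. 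A standard stepsize calculation in the Euclidean norm (as in~\cite{polyak1987}) then gives $\|I - \gamma H(y)\|_2 \leq q$.

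The main obstacle is passing from this Euclidean contraction to a contraction in the weighted block-maximum norm, because the two norms differ by the dimension- and weight-dependent constants produced by Lemma~1. The plan is to examine each block separately: decompose $\theta_i(y) - \hat{x}_{A,i}$ as $(1 - \gamma \alpha_i)(y_i - \hat{x}_{A,i})$ minus the coupling term $\gamma \bigl(\nabla_i f(y) - \nabla_i f(\hat{x}_A)\bigr)$, and bound each piece in the $p_i$-norm. The diagonal piece contributes a factor of $|1 - \gamma \alpha_i|$ times $\|y_i - \hat{x}_{A,i}\|_{p_i}$, and the off-diagonal piece is controlled via the per-block Lipschitz constant $L_i$ together with the block-to-Euclidean conversion of Lemma~1 applied to the relevant row block of $I - \gamma H(y)$. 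These two contributions combine to give a per-block contraction of $\max(|1 - \gamma \alpha_i|,\,|1 - \gamma L_i|) \leq q$ on $\|y_i - \hat{x}_{A,i}\|_{p_i}/w_i$. Taking the maximum over $i \in [N]$ finishes condition~(4), and the nested-set construction of~\cite{Bertsekas1989,bertsekas1989parallell} then yields asymptotic convergence of Algorithm~1.
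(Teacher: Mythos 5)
Your treatment of conditions (1)--(3) matches the paper's, and your setup for condition (4) --- the factorization $\theta(y)-\hat{x}_A=(I-\gamma H(y))(y-\hat{x}_A)$ via the fundamental theorem of calculus and the spectral bound $\left\Vert I-\gamma H(y)\right\Vert_2\leq q$ --- is exactly the paper's argument. The divergence, and the gap, is in how you pass from this Euclidean bound to the per-block $p_i$-norm bound. You propose to split $\theta_i(y)-\hat{x}_{A,i}$ into the diagonal piece $(1-\gamma\alpha_i)(y_i-\hat{x}_{A,i})$ and the coupling piece $-\gamma\left(\nabla_i f(y)-\nabla_i f(\hat{x}_A)\right)$, bound each separately, and then claim the two contributions ``combine'' into the factor $\max\left(|1-\gamma\alpha_i|,|1-\gamma L_i|\right)$. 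They do not: once the two pieces are separated by the triangle inequality you can only \emph{add} their bounds, giving something of the form $|1-\gamma\alpha_i|\left\Vert y_i-\hat{x}_{A,i}\right\Vert_{p_i}+\gamma L_i\cdot(\text{a norm of the full error})$, and this sum can exceed the error you started with (take $\alpha_i$ tiny and $\gamma L_i$ near $1$: the combined factor is near $2$). The quantity $q=\max\left\{\max_i|1-\gamma\alpha_i|,\max_i|1-\gamma L_i|\right\}$ arises as the spectral norm of the \emph{whole} symmetric matrix $I-\gamma H(y)$, in which the regularization and the coupling Hessian interact; it cannot be reassembled from separate bounds on the diagonal and off-diagonal parts.

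The paper instead keeps $I-\gamma H(y)$ intact: it bounds $\frac{1}{w_i}\left\Vert\theta_i(y)-\hat{x}_{A,i}\right\Vert_{p_i}$ by $\left\Vert(I-\gamma H(y))(y-\hat{x}_A)\right\Vert_{\max}\leq\left\Vert I-\gamma H(y)\right\Vert_{\max}\left\Vert y-\hat{x}_A\right\Vert_{\max}$, invokes Lemma~1 to control $\left\Vert I-\gamma H(y)\right\Vert_{\max}$ by a constant times $\left\Vert I-\gamma H(y)\right\Vert_2=q$, and then absorbs the constant using $w_{\min}\geq 1$. You correctly sensed that this conversion constant is the crux --- for $p_{\min}<2$ Lemma~1 produces a factor $n^{(p_{\min}^{-1}-\frac{1}{2})}w_{\min}^{-1}$ that is not obviously at most one --- but your per-block repair does not close that hole; it replaces it with a different one. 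To complete your version you would need either to follow the paper's route and justify that the Lemma~1 constant is at most one for the weights and norms in play, or to establish a genuine per-block contraction by treating the row blocks of $I-\gamma H(y)$ as whole operators, not by a diagonal/off-diagonal split.
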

\begin{proof}
For Assumption~4.1 we see that
\begin{equation}
X\left(s+1\right)=\left\{ y\in X:\left\Vert y-\hat{x}_{A}\right\Vert _{\max}\leq q^{s+1}D_{o}\right\} .
\end{equation}
Since $q\in\left(0,1\right)$, we have $q^{s+1}<q^{s}$, which results
in $\left\Vert y-\hat{x}_{A}\right\Vert_{\max} \leq q^{s+1}D_{o}<q^{s}D_{o}$.
Then $y\in{X}\left(s+1\right)$ implies $y\in{X}\left(s\right)$ and $X\left(s+1\right)\subset X\left(s\right)\subset X$, as desired.

From Assumption~4.2 we find
\begin{equation}
\begin{aligned}\underset{s\rightarrow\infty}{\lim}X\left(s\right) & =\underset{s\rightarrow\infty}{\lim}\left\{ y\in X:\left\Vert y-\hat{x}_{A}\right\Vert _{\max}\leq q^{s}D_{o}\right\} \\
 & =\left\{ y\in X:\left\Vert y-\hat{x}_{A}\right\Vert _{\max}\leq0\right\} \\
 & =\left\{ \hat{x}_{A}\right\} ,
\end{aligned}
\end{equation}
and Assumption 4.2 is therefore satisfied. The structure of the weighted
block-maximum norm then allows us to see that
$\left\Vert y-\hat{x}_{A}\right\Vert _{\max}\leq q^{s}D_{o}$ if and 
only if $\frac{1}{w_{i}}\left\Vert y_{i}-\hat{x}_{A,i}\right\Vert _{p_{i}}\leq q^{s}D_{o}$
for all $i\in\left[N\right]$. It then follows that
\begin{equation}
X_{i}\left(s\right)=\left\{ y_{i}\in X_{i}:\frac{1}{w_{i}}\left\Vert y_{i}-\hat{x}_{A,i}\right\Vert _{p_{i}}\leq q^{s}D_{o}\right\} ,
\end{equation}
which shows $X\left(s\right)=X_{1}\left(s\right)\times\cdots\times X_{N}\left(s\right)$, thus satisfying Assumption 4.3.

In order to show Assumption 4.4 is satisfied we recall the following exact expansion of $\nabla f_{A}$:

\begin{equation}
\begin{aligned} \nabla f_{A}\left(y\right)-\nabla f_{A}\left(\hat{x}_{A}\right)
& \ =\int_{0}^{1}\nabla^{2}f_{A}\left(\hat{x}_{A}+\tau\left(y-\hat{x}_{A}\right)\right)\left(y-\hat{x}_{A}\right)d\tau\\
& \ =\left(\int_{0}^{1}\nabla^{2}f_{A}\left(\hat{x}_{A}+\tau\left(y-\hat{x}_{A}\right)\right)d\tau\right)\cdot\left(y-\hat{x}_{A}\right)\\
 & \ \eqqcolon H\left(y\right)\left(y-\hat{x}_{A}\right),
\end{aligned}
\label{eq:heshian}
\end{equation}
where we have defined
\begin{equation}
H\left(y\right)=\int_{0}^{1}\nabla^{2}f_{A}\left(\hat{x}_{A}+\tau\left(y-\hat{x}_{A}\right)\right)d\tau.
\end{equation}

We then see that for $y\in X\left(s\right)$,

\begin{equation}
\begin{aligned} \frac{\left\Vert \theta_{i}\left(y\right)-\hat{x}_{A,i}\right\Vert _{p_{i}}}{w_{i}}
& =\frac{1}{w_{i}}\left\Vert y_{i}-\gamma\nabla_{i}f_{A}\left(y\right)-\hat{x}_{A,i}+\gamma\nabla_{i}f_{A}\left(\hat{x}_{A}\right)\right\Vert _{p_{i}}\\
& \leq\underset{i\in\left[N\right]}{\max}\frac{1}{w_{i}}\left\Vert y_{i}-\gamma\nabla_{i}f_{A}\left(y\right)-\hat{x}_{A,i}+\gamma\nabla_{i}f_{A}\left(\hat{x}_{A}\right)\right\Vert _{p_{i}}\\
 & =\left\Vert y-\hat{x}_{A}-\gamma\nabla f_{A}\left(y\right)+\gamma\nabla f_{A}\left(\hat{x}_{A}\right)\right\Vert _{\max}\\
 & =\left\Vert y-\hat{x}_{A}-\gamma\left(\nabla f_{A}\left(y\right)-\nabla f_{A}\left(\hat{x}_{A}\right)\right)\right\Vert _{\max}\\
 & =\left\Vert y-\hat{x}_{A}-\gamma H\left(y\right)\left(y-\hat{x}_{A}\right)\right\Vert _{\max}\\
 & \leq\left\Vert I-\gamma H\left(y\right)\right\Vert _{\max}\left\Vert {y-\hat{x}_{A}}\right\Vert _{\max}\\
 & \leq\left\{ \begin{array}{cc}
\frac{n^{\left(p_{\min}-\frac{1}{2}\right)}}{w_{\min}}\left\Vert I\!\!-\!\gamma H\!\left(y\right)\right\Vert _{2}\left\Vert {y-\hat{x}_{A}}\right\Vert _{\max} & p_{\min}<2\\[5pt]
\frac{1}{w_{\min}}\left\Vert I-\gamma H\left(y\right)\right\Vert _{2}\left\Vert{y-\hat{x}_{A}}\right\Vert _{\max} & p_{\min}\geq2
\end{array}\right.,
\end{aligned}
\end{equation}
where we have used Equation~\eqref{eq:heshian} in the fourth equality and Lemma~1 in the third inequality. We then define the vector
$\nabla f_{A}=\left(\nabla_{1}f_{A},\ldots,\nabla_{N}f_{A}\right)^{T}$
which has a Lipschitz constant of $M=\sqrt{\sum_{i=1}^{N}L_{i}^{2}}$.
It then follows from the definition of $f_{A}$ that $A\preceq H\left(\cdot\right)\preceq MI$,
which implies that the eigenvalues of $H\left(\cdot\right)$ are bounded
below by the smallest diagonal entry of $A$ and above by $M$. Since $H\left(y\right)$
is a symmetric matrix it follows that

\begin{equation}
\begin{aligned} \left\Vert I-\gamma H\left(y\right)\right\Vert _{2}
 &  =\max\left\{ \left|\lambda_{\min}\left(I-\gamma H\left(y\right)\right)\right|,\left|\lambda_{\max}\left(I-\gamma H\left(y\right)\right)\right|\right\} \\
 &  =\max\left\{ \underset{i\in\left[N\right]}{\max}\left|1-\gamma\alpha_{i}\right|,\underset{i\in\left[N\right]}{\max}\left|1-\gamma L_{i}\right|\right\} \\
 &  =q,
\end{aligned}
\end{equation}
where $\lambda_{\min}\left(\cdot\right)$ and $\lambda_{\max}\left(\cdot\right)$
are the minimum and maximum eigenvalues of a matrix, respectively.
Using the hypothesis that $y\in{X}\left(s\right)$, we find
\begin{equation}
\begin{aligned} \frac{\left\Vert \theta_{i}\left(y\right)-\hat{x}_{A,i}\right\Vert _{p_{i}}}{w_{i}}
& \leq\left\{ \begin{array}{cc}
n^{\left(p_{\min}^{-1}-\frac{1}{2}\right)}w_{\min}^{-1}q\left\Vert y-\hat{x}_{A}\right\Vert _{\max} & p_{\min}<2\\
\frac{1}{w_{\min}}q\left\Vert y-\hat{x}_{A}\right\Vert _{\max} & p_{\min}\geq2
\end{array}\right.\\
 & \leq\left\{ \begin{array}{cc}
n^{\left(p_{\min}^{-1}-\frac{1}{2}\right)}w_{\min}^{-1}q^{s+1}D_{o} & p_{\min}<2\\
\frac{1}{w_{\min}}q^{s+1}D_{o} & p_{\min}\geq2
\end{array}\right.\\
 & \leq\left\{ \begin{array}{c}
q^{s+1}D_{o}\\
q^{s+1}D_{o}
\end{array}\right.,
\end{aligned}
\end{equation}
where the bottom case follows from $w_{\min}\geq {1}$ and the top case follows
from $w_{\min}\geq {1}$ and $p_{\min}^{-1}-\frac{1}{2}<1$. Then $\theta_{i}\left(y\right)\in X_{i}\left(s+1\right)$ and Assumption 4.4 is satisfied.
\end{proof}
As noted above, the fact that the construction in Equation~\eqref{eq:Xsets} satisfies Assumption 4 implies asymptotic convergence of Algorithm~1 for all $i\in\left[N\right]$ from \cite{Bertsekas1989} and \cite{bertsekas1989parallell}. With this in mind, we next derive a rate of convergence for Algorithm~1.

\subsection{Convergence Rate}
The structure of the sets $\left\{ X\left(s\right)\right\} _{s\in\mathbb{N}}$
allows us to determine a convergence rate. However, to do so we must
first define the notion of a \emph{communication cycle}. Starting at time $k=0$, one cycle occurs
when all agents have calculated a state update and this updated state
has been sent to and received by each other agent. It
is only then that each agents' copy of the ensemble state is moved
from $X\left(0\right)$ to $X\left(1\right)$. Once another cycle
is completed the ensemble state is moved from $X\left(1\right)$ to
$X\left(2\right)$. This process repeats indefinitely, and coupled
with Assumption 4, means the convergence rate is geometric in the
number of cycles completed, which we show now.
\begin{theorem}
Let Assumptions 1-4 hold and let $\gamma\in\left(0,\frac{2}{L_{\max}}\right)$.
At time $k$, if $c\left(k\right)$ cycles have been completed, then
\begin{equation}
\left\Vert x^{i}\left(k\right)-\hat{x}_{A}\right\Vert _{\max}\leq q^{c\left(k\right)}D_{o}
\end{equation}
for all $i\in\left[N\right]$.
\end{theorem}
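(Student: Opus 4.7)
The plan is to prove the bound by induction on the number of completed cycles, using the fact that (by Equation~\eqref{eq:Xsets}) the claim $\|x^i(k)-\hat{x}_A\|_{\max}\leq q^{c(k)}D_o$ is equivalent to $x^i(k)\in X(c(k))$ for all $i$. The base case $c(k)=0$ is immediate from the definition of $D_o$, which forces $x^i(0)\in X(0)$ for every $i$. The substantive content of the theorem is therefore the inductive step: if at some timestep every agent's stored ensemble state lies in $X(s)$, then once the next complete cycle finishes, every agent's stored ensemble state lies in $X(s+1)$.

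To carry out the inductive step, I would exploit the definition of a cycle, which guarantees two things during the cycle: (i) each agent $i$ performs at least one update of its own block via the map $\theta_i$, and (ii) the post-update state of every agent $i$ is received and installed by every other agent $j$ before the cycle ends. For (i), whenever agent $i$ updates while its stored vector $x^i$ still lies in $X(s)$, Assumption~4.4 places the new $x_i^i$ in $X_i(s+1)$. For (ii), the rule $x_j^i(k+1)=x_j^j(\tau_j^i(k+1))$ in Algorithm~1 means that once agent $i$ has received $j$'s post-update value, the $j$-th block of $x^i$ equals a value of $x_j^j$ that was already updated during the cycle, hence lies in $X_j(s+1)$. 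Once (i) has occurred for every $i$ and (ii) has occurred for every ordered pair $(i,j)$, every block of every $x^i$ lies in the corresponding $X_j(s+1)$, so Assumption~4.3 yields $x^i\in X(s+1)$.

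The delicate point, and the place I would spend the most care, is showing that intermediate states encountered during the cycle never leave $X(s)$, so that Assumption~4.4 keeps applying. This is really a monotonicity argument built on the nesting $X(s+1)\subset X(s)$ from Assumption~4.1: any block that has already been updated in the current cycle sits in $X_j(s+1)\subset X_j(s)$, any block that has not yet been updated still sits in $X_j(s)$ by the inductive hypothesis, and any block freshly received from another agent equals a value that was computed in either the current or a previous cycle and is therefore in $X_j(s)$ as well. Consequently, every ensemble vector encountered by agent $i$ during the cycle lies in $X(s)$, so every update it performs in that cycle lands its own block in $X_i(s+1)$, and (by $X_i(s+1)\subset X_i(s)$) subsequent updates within the same cycle continue to do so.

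Combining the base case with the inductive step gives $x^i(k)\in X(c(k))$ for every $i$ at every time $k$, and the theorem follows by the definition of $X(c(k))$ in Equation~\eqref{eq:Xsets}. The main obstacle is purely the combinatorial bookkeeping of which blocks have been updated and which have been communicated at each instant during a cycle; once that is handled, the theorem reduces to an application of Assumptions~4.1, 4.3, and 4.4 together with the cycle definition.
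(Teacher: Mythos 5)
Your proposal is correct and follows essentially the same route as the paper: an induction on completed cycles showing $x^i(k)\in X(c(k))$, with Assumption~4.4 handling each agent's own updates, the communication rule propagating updated blocks, and Assumption~4.3 reassembling the product. The paper's own proof is far terser and silently skips the ``delicate point'' you isolate (that intermediate states remain in $X(s)$ throughout the cycle, via the nesting in Assumption~4.1), so your write-up is, if anything, a more complete version of the same argument.
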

\begin{proof}
From the definition of $D_{o}$, for all $i\in\left[N\right]$ we
have $x^{i}\left(0\right)\in X\left(0\right)$. If agent $i$ computes
a state update, then $\theta_{i}\left(x^{i}\left(0\right)\right)\in X_{i}\left(1\right)$
and after one cycle is completed, say at time $k$, we have $x^{i}\left(k\right)\in X\left(1\right)$ for all $i$.
Iterating this process, after $c\left(\bar{k}\right)$ cycles have been completed by some time $\bar{k}$, $x^{i}\left(\bar{k}\right)\in X\left(c\left(\bar{k}\right)\right)$. The result follows by expanding the definition of $\{X\left(s\right)\}_{s\in\mathbb{N}}$.
\end{proof}

Theorem~3 can be used by a network operator to bound agents' convergence by simply observing them and without specifying when or how often agents should generate or share information. Having shown convergence of Algorithm~1, we next demonstrate its performance in practice.

\section{Simulation}
In this section we present a problem to be solved using Algorithm~1. The simulation uses a
network consisting of 8 nodes and 9 edges, where we define the set $\varepsilon\coloneqq\left[9\right]$ as the set of indices of the edges.
There are $N=8$ agents that are users of this network and they are each tasked with routing a flow between two nodes. The network itself is shown in Figure~\ref{fig:network}; we emphasize that the nodes in the network are not the agents themselves, but instead are simply source/destination pairs for users to route flows between.
The starting and ending nodes
as well as the edges traversed for each agents' flow are listed in
Table~\ref{tab:Edges-Traversed-by}.
\begin{figure}[H]
\centering
\includegraphics[width=2.6in]{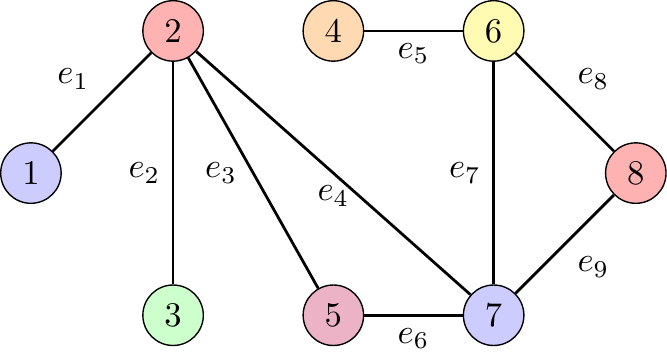}
\caption{The network through which eight agents must route a flow between two nodes}
\label{fig:network}
\end{figure}
\begin{table}[H]
\centering{}%
\begin{tabular}{|c|c|c|}
\hline 
Agent Number & Start Node$\rightarrow$End Node & Edges Traversed\tabularnewline
\hline 
\hline 
1 & $1\rightarrow7$ & $e_{1},e_{3},e_{6}$\tabularnewline
\hline 
2 & $2\rightarrow8$ & $e_{4},e_{7},e_{8}$\tabularnewline
\hline 
3 & $3\rightarrow4$ & $e_{2},e_{4},e_{7},e_{5}$\tabularnewline
\hline 
4 & $5\rightarrow6$ & $e_{3},e_{4},e_{7}$\tabularnewline
\hline 
5 & $1\rightarrow4$ & $e_{1},e_{3},e_{6},e_{7},e_{5}$\tabularnewline
\hline 
6 & $3\rightarrow8$ & $e_{2},e_{4},e_{9}$\tabularnewline
\hline 
7 & $4\rightarrow5$ & $e_{5},e_{8},e_{9},e_{6}$\tabularnewline
\hline 
8 & $6\rightarrow2$ & $e_{7},e_{4}$\tabularnewline
\hline 
\end{tabular}\caption{Edges traversed by each agent's flow
\label{tab:Edges-Traversed-by}}
\end{table}

The cost function of agent $i$ is $f_{i}\left(x_{i}\right)=-100\log\left(1+x_{i}\right)$,
and the coupling cost is $c\left(x\right)=\frac{1}{20}x^{T}C^{T}Cx$,
where the network connection matrix is defined as
\begin{equation}
C_{k,i}=\left\{ \begin{array}{cc}
1 & \textrm{if flow }i\textrm{ traverses edge }k\\
0 & \textrm{otherwise}
\end{array}\right..
\end{equation}

This problem was then implemented such that agent $i$ had its own
regularization parameter $\alpha_{i}>0$, normalization constant $w_{i}\geq 1$, and $p_{i}$ norm with $p_{i}\in\left[ 1,\infty\right]$. In particular, these parameters were chosen using $w = [12,8,6,7,6,10,9,10]$ and
$p = [\infty,20,3,90,6,12,2,9]$, where $w_{i}$ is the $i^{th}$ element in $w$ and $p_{i}$ is defined analogously. All agents' behaviors were randomized to give each agent a $10\%$ chance of computing an update at any timestep and to give each pair of agents a $10\%$ chance of communicating at each timestep. Three total simulation runs were executed using the three different choices of $A$ listed to demonstrate its effects upon convergence, with

\begin{equation}
\begin{aligned}
A_1 & = \textnormal{diag}[3\!\!\times \!\!10^{-4},1\!\!\times \!\!10^{-4},9\!\!\times \!\!10^{-4},2\!\!\times \!\!10^{-4},0.001,0.001,5\!\! \times \!\! 10^{-4},4\!\! \times \!\! 10^{-4}] \\
A_2 & = \textnormal{diag}[0.01,0.01,0.003,0.005,0.002,0.01,0.005,0.002] \\
A_3 & = \textnormal{diag}[0.08,0.1,0.1,0.09,0.009,0.1,0.08,0.04].
\end{aligned}
\end{equation}

A plot of error versus iteration count for a run with $A_1$ is shown in Figure~\ref{fig:error0.001}, which shows that the regularization provided by $A_1$ can provide robustness to asynchrony without significantly impacting the final point obtained by Algorithm~1. In addition, close convergence to a minimizer is attained in a reasonable number of iterations, even when agents infrequently generate and share information.

\begin{figure}[H]
\centering
\includegraphics[width=3.3in]{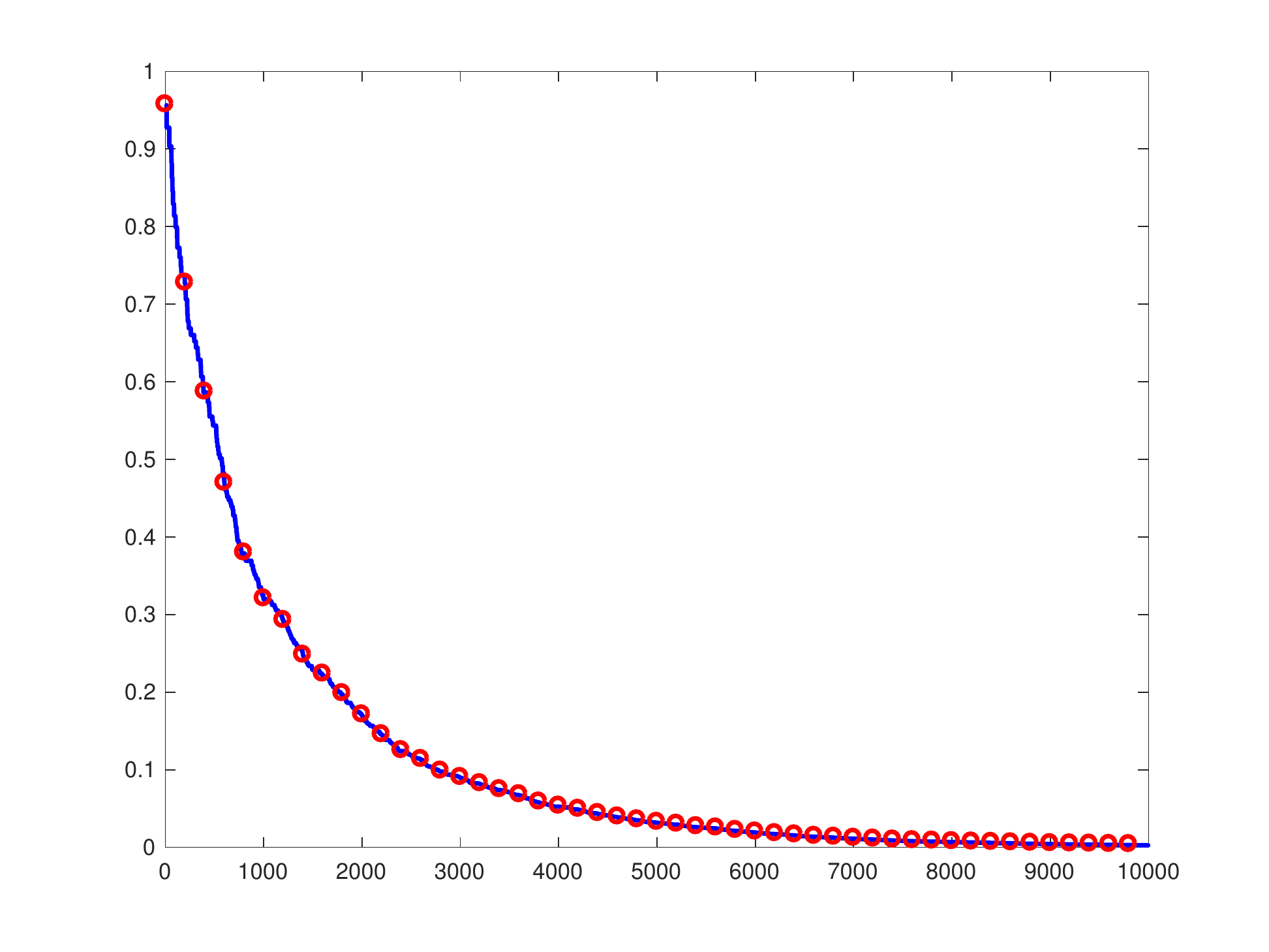}
\caption{Regularized and unregularized error for agent 1 where $\left\Vert{A_{1}}\right\Vert =0.001$. Here, the regularized error is shown as a line and the unregularized error is shown by the circles. As expected, both errors converge to small final values, indicating close convergence to both $\hat x$ and $\hat{x}_{A}$ when $\left\Vert{A}\right\Vert$ is small.}
\label{fig:error0.001}
\end{figure}

To demonstrate the impact of larger regularizations, a simulation was run with $A_{2}$, and an error plot for this run is shown in Figure~\ref{fig:error0.01}.
\begin{figure}[H]
\centering
\includegraphics[width=3.3in]{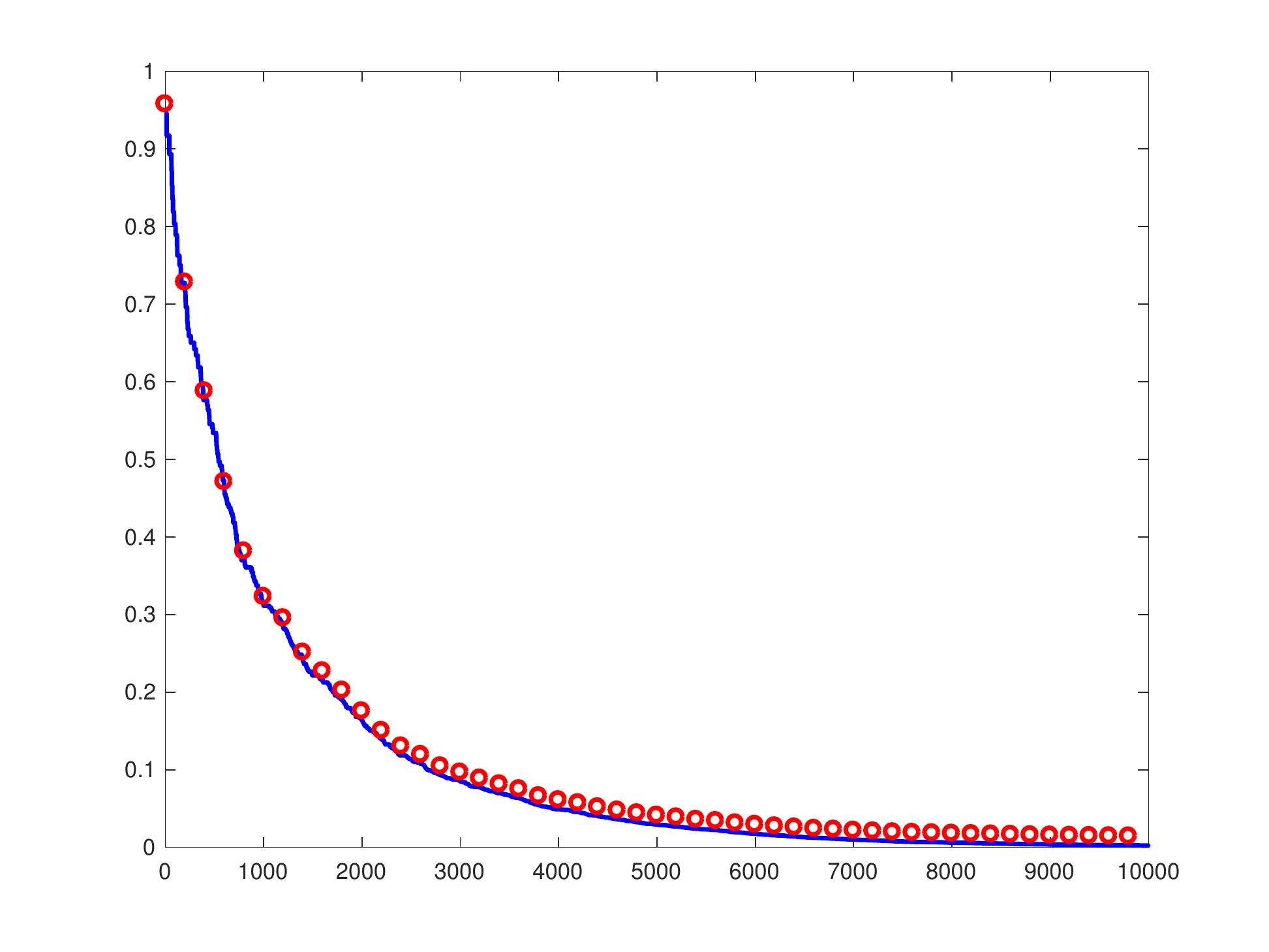}
\caption{Regularized and unregularized error for agent 1 where $\left\Vert{A_{2}}\right\Vert =0.01$. The regularized error is shown as a line and the unregularized error is shown by the circles. Because $\left\Vert{A}\right\Vert$ is larger, the agents converge to a minimum faster, though there is a larger discrepancy between $\hat x$ and $\hat{x}_{A}$, as evidenced by the asymptotic disagreement between the two curves shown here.}
\label{fig:error0.01}
\end{figure}

To further illustrate the effects of regularizing, a third and final simulation was run with $A_3$, and a plot of error in this case is shown in Figure~\ref{fig:error0.1}.
\begin{figure}[H]
\centering
\includegraphics[width=3.3in]{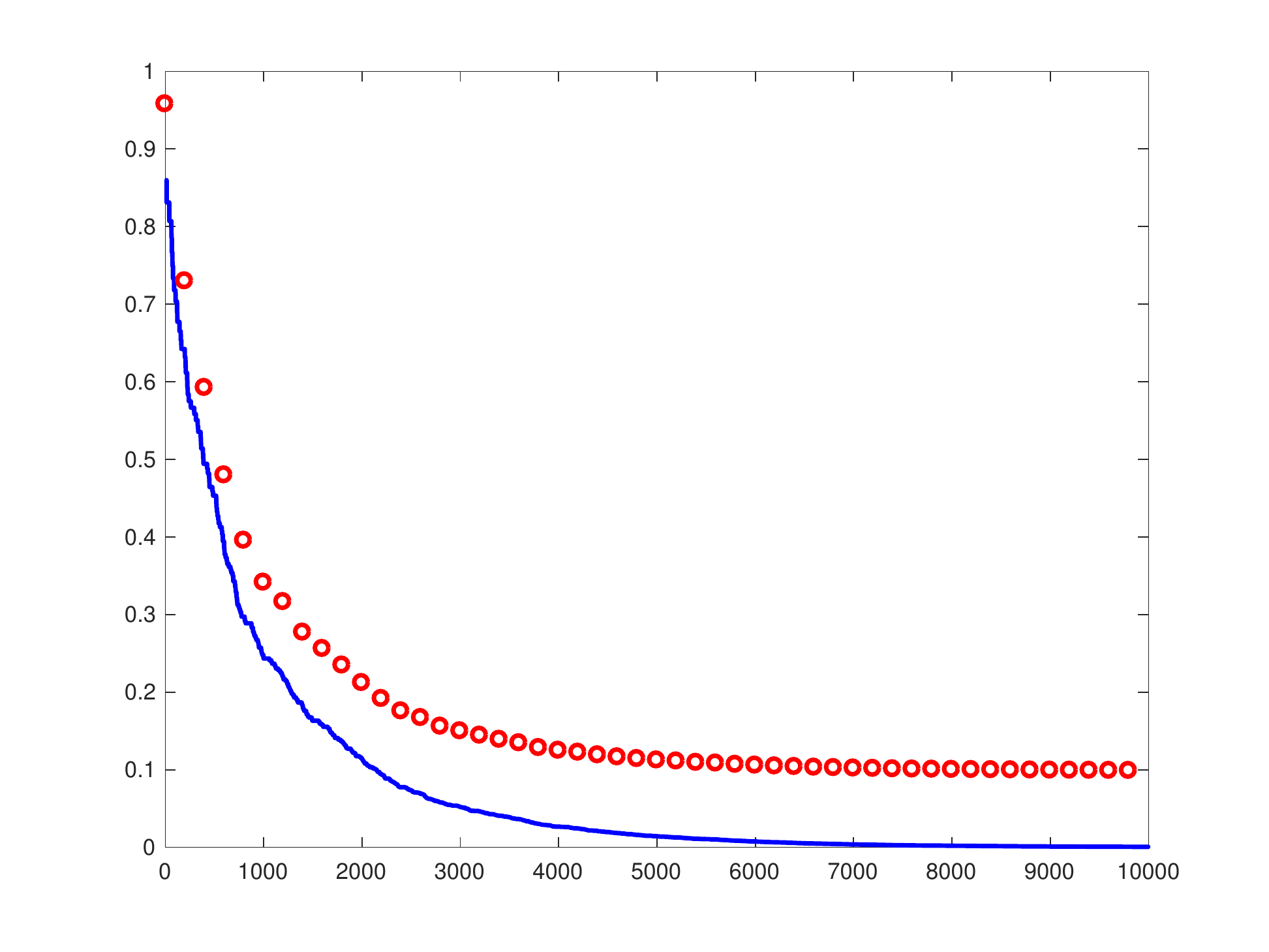}
\caption{Regularized and unregularized error for agent 1 where $\left\Vert{A_{3}}\right\Vert =0.1$. The regularized error is shown as a line and the unregularized error is shown by the circles. As expected, this run converges faster (because its value of $q$ smaller), but with the largest error in the final solution obtained, indicating that a significant acceleration in convergence comes in exchange for a less accurate solution.}
\label{fig:error0.1}
\end{figure}

To enable numerical comparisons of these convergence results, final error values for all three runs are shown in Table~\ref{tab:Errors}, where we see that larger values of $\Vert{A}\Vert$ do indeed lead to larger  errors.
\begin{table}[H]
\centering{}%
\begin{tabular}{|c|c|c|}
\hline 
$\left\Vert{A}\right\Vert$ & Final Regularized Error & Final Unregularized Error \tabularnewline
\hline 
\hline 
0.001 & $2.2575\times 10^{-8}$ & $2.9558\times 10^{-4}$\tabularnewline
\hline 
0.01 & $2.1837\times 10^{-8}$ & $8.4922\times 10^{-4}$\tabularnewline
\hline 
0.1 & $7.9827\times 10^{-10}$ & $0.0848$\tabularnewline
\hline 
\end{tabular}\caption{Errors for agent 1\label{tab:Errors}}
\end{table}

\section{Conclusion}
This work presented an asynchronous optimization framework which allows for arbitrarily delayed communications and computations. Future extensions to this work include incorporating constraints in order to accommodate broader classes of problems \cite{Hale2014}, and using time-varying regularizations to always reach exact solutions. Future applications include use in robotic swarms where communications are unreliable and asynchrony is unavoidable.
 
\bibliographystyle{IEEEtran}{}
\bibliography{CDC}

\end{document}